\newtheorem{theorem}{Theorem}[section]
\newtheorem{lemma}[theorem]{Lemma}
\newtheorem{corollary}[theorem]{Corollary}
\newtheorem{example}[theorem]{Example}
\newtheorem{remark}[theorem]{Remark}
\def\erre{{\rm I\!R}}
\def\RR{{\rm I\!R}}
\def\phi{\varphi}
\title[Nonlinear elliptic equations on Carnot groups]{Nonlinear elliptic equations on Carnot groups}
\author{Massimiliano Ferrara}
\address[Massimiliano Ferrara]{University of Reggio Calabria and CRIOS University Bocconi of Milan, Via dei Bianchi presso Palazzo Zani, 89127 Reggio Calabria, Italy
}
\email{massimiliano.ferrara@unirc.it}
\author{Giovanni Molica Bisci}
\address[Giovanni Molica Bisci]{Dipartimento P.A.U., Universit\`a  degli
Studi Mediterranea di Reggio Calabria, Salita Melissari - Feo di
Vito, 89100 Reggio Calabria, Italy} \email{gmolica@unirc.it}
\author{Du\v{s}an Repov\v{s}}
\address[Du\v{s}an Repov\v{s}]{Faculty of Education, and Faculty of Mathematics and Physics, University of Ljubljana, 1000 Ljubljana, Slovenia
}
\email{dusan.repovs@guest.arnes.si}
\thanks{{\it 2010 Mathematics Subject Classification.} Primary: 35H20;
Secondary: 43A80, 35J70.}
\keywords{Subelliptic equation; Carnot group; Weak solution; Critical point result; Heisenberg group; Folland-Stein space.}
\thanks{Typeset by \LaTeX}
\begin{document}

\begin{abstract}
This article concerns a class of elliptic equations on Carnot groups depending on one real positive parameter and involving a subcritical nonlinearity (for the critical case we refer to 	G. Molica Bisci and D. Repov\v{s}, 
{\sl Yamabe-type equations on Carnot groups}, 
Potential Anal. 46:2 (2017), 369-383; \href{https://arxiv.org/abs/1705.10100}{arXiv:1705.10100 [math.AP]}
As a special case of our results we prove the existence of at least one nontrivial solution for a subelliptic equation defined on
a smooth and bounded domain $D$ of the Heisenberg group
$\mathbb{H}^n=\mathbb{C}^n\times \erre$.
The main approach is based on variational methods.
\end{abstract}
\maketitle

\section{Introduction}

Analysis on Carnot-Carath\'{e}odory (briefly CC) spaces is a field currently undergoing great development. These abstract structures are a special class of metric spaces in which
the interactions between analytical and geometric tools have been carried out with prosperous results.\par
 In this setting, a fundamental
role is played by Carnot groups that, as it is well-known, are finite dimensional, simply connected Lie
groups $\mathbb{G}$ whose Lie algebra $\mathfrak{g}$ of left invariant vector fields is stratified (see Section \ref{section2}).
Roughly speaking Carnot groups can be seen as local models of CC spaces. Indeed, they are the natural
{tangent} spaces to CC spaces, exactly as Euclidean spaces are tangent to manifolds.\par

It is well-known that a great attention has been focused by many authors on the study of subelliptic equations on Carnot groups and in particular, on the Heisenberg group $\mathbb{H}^n$. See, among others, the papers \cite{BK, BFP, DM, GaLa,Lo1,PiVa}, as well as \cite{Mingio0, Mingio} and references therein.\par
 Motivated by this large interest, we study here the existence of weak solutions for the following problem
 $$
(P_{\lambda}^{f})\,\,\,\,\,\,\,\,\,\,\left\{
\begin{array}{ll}
-\Delta_{\mathbb{G}} u=\displaystyle \lambda f(\xi,u) &  \mbox{\rm in } D\\
u|_{\partial D}=0, &
\end{array}\right.
$$
 where $D$ is a smooth bounded domain of the Carnot group $\mathbb{G}$, $\Delta_{\mathbb{G}}$ is the subelliptic Laplacian on $\mathbb{G}$, and $\lambda$ is a positive real parameter.\par
  Problem $(P_{\lambda}^{f})$ has a variational nature, hence its weak solutions can be found as critical points of a suitable functional $\mathcal{J}_{\lambda}$ defined on the Folland-Stein space $S^1_0(D)$, whose analytic construction is recalled in Section \ref{section2}.\par

 Thanks to this fact, the main approach is based on the direct methods of calculus of variations (see \cite{MRS}) and on the geometric abstract framework on Carnot groups (see, among others, the classical reference \cite{BLU} and references therein).\par
 More precisely, under a suitable subcritical growth condition on the nonlinear term $f$, we are able to prove the existence of at least one (non-trivial) weak solution of problem $(P_{\lambda}^{f})$ provided that $\lambda$ belongs to a precise bounded interval of positive parameters.\par
  The main novelty of this new framework is
that, instead of the usual assumptions on functionals, it requires some hypotheses on the
nonlinearity, which allow for better understanding of the existence phenomena. This allows us to enlarge
the set of applications of the direct minimization exploiting this abstract method
without any asymptotic condition of the term $f$ at zero, as requested in \cite[Theorem 3.1]{GaLa}.\par
  A special case of our result, in the Heisenberg setting, reads as follows.
\begin{theorem}\label{FerraraMolicaBisci22Generale2}
Let $D$ be a smooth and bounded domain of the {Heisenberg group}
$\mathbb{H}^n$ and let
$f:\erre\rightarrow\erre$ be a continuous function such that
\begin{equation}\label{growth}
\sup_{t\in\erre}\frac{|f(t)|}{1+|t|^{p}}\leq \kappa,
\end{equation}
where $p\in (1,\gamma 2^*_h-1)$, with $\gamma\in (2/2^*_h,1)$ and $2^*_h:=\displaystyle 2\left(\frac{n+1}{n}\right)$.
Assume that
\begin{equation}\label{growth2}
0<\kappa<\frac{(p-1)^{\frac{p-1}{p}}}{pc_{1,\gamma}^{\frac{p-1}{p}}c_{2,\gamma}^{\frac{p+1}{p}}}|D|^{\frac{1-\gamma}{p}+
\frac{(1-p)(\gamma 2^{*}_h-1)}{p\gamma 2^{*}_h}
},
\end{equation}
 where $c_{1,\gamma}$ and $c_{2,\gamma}$ denote the embedding constants of the Folland-Stein space $\mathbb{H}^1_0(D)$ in $L^{{\gamma 2^{*}_h}}(D)$ and $L^{\frac{p+1}{\gamma}}(D)$, respectively.\par
\indent Then
the following subelliptic problem
$$
(P_{\kappa})\,\,\,\,\,\,\,\,\,\,\left\{
\begin{array}{ll}
-\Delta_{\mathbb{H}^n} u=\displaystyle f(u) &  \mbox{\rm in } D\\
u|_{\partial D}=0, &
\end{array}\right.
$$
has a weak solution $u_{0,\kappa}\in \mathbb{H}^1_0(D)$ such that $$\|u_{0,\kappa}\|_{\mathbb{H}^1_0(D)}< \left(\kappa p\kappa_{2,\gamma}^{p+1}|D|^{1-\gamma}\right)^{\frac{1}{1-p}}.$$
\end{theorem}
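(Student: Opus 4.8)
The plan is to obtain the weak solutions of $(P_\kappa)$ as critical points of the $C^1$ energy functional
$$\mathcal{J}(u):=\frac12\,\|u\|_{\mathbb{H}^1_0(D)}^2-\int_D F(u(\xi))\,d\xi,\qquad F(t):=\int_0^t f(s)\,ds,$$
on the reflexive Folland--Stein space $\mathbb{H}^1_0(D)$, and to produce one of them by the direct method localized on a ball, in the spirit of a Bonanno-type local minimum theorem. Write $\mathcal{J}=\Phi-\Psi$ with $\Phi(u):=\frac12\|u\|^2_{\mathbb{H}^1_0(D)}$ and $\Psi(u):=\int_D F(u)\,d\xi$. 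First I would record the background facts: $\Phi$ is convex, continuous, coercive and sequentially weakly lower semicontinuous; and, since $p\in(1,\gamma 2^*_h-1)$ with $\gamma\in(2/2^*_h,1)$ forces both $\gamma 2^*_h$ and $(p+1)/\gamma$ into the subcritical interval $(2,2^*_h)$, the Folland--Stein embeddings $\mathbb{H}^1_0(D)\hookrightarrow L^{\gamma 2^*_h}(D)$ and $\mathbb{H}^1_0(D)\hookrightarrow L^{(p+1)/\gamma}(D)$ are \emph{compact}, with norms $c_{1,\gamma}$ and $c_{2,\gamma}$. Under the subcritical growth \eqref{growth} the superposition operator $u\mapsto F(u)$ is then well defined and $\Psi\in C^1(\mathbb{H}^1_0(D))$ with compact derivative; in particular $\Psi$ is sequentially weakly continuous, so $\mathcal{J}$ is sequentially weakly lower semicontinuous. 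Note that, because $p+1>2$, $\mathcal{J}$ is \emph{not} coercive on the whole space, which is why the argument must be localized.

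The core is an a priori bound for $\Psi$ on balls. Integrating \eqref{growth} gives $|F(t)|\le\kappa\big(|t|+|t|^{p+1}\big)$, and two applications of Hölder's inequality on the bounded set $D$ give
$$\int_D|u|\,d\xi\le|D|^{\frac{\gamma 2^*_h-1}{\gamma 2^*_h}}\|u\|_{L^{\gamma 2^*_h}(D)},\qquad \int_D|u|^{p+1}\,d\xi\le|D|^{1-\gamma}\|u\|_{L^{(p+1)/\gamma}(D)}^{p+1}.$$
Combining with the two embeddings, for every $u$ with $\Phi(u)\le r$ (i.e. $\|u\|\le\sqrt{2r}$) one gets a bound of the form
$$\sup_{\Phi(u)\le r}\Psi(u)\le\kappa\Big(c_{1,\gamma}|D|^{\frac{\gamma 2^*_h-1}{\gamma 2^*_h}}(2r)^{1/2}+c_{2,\gamma}^{p+1}|D|^{1-\gamma}(2r)^{\frac{p+1}{2}}\Big).$$
One then fixes the level $\bar r>0$ by balancing the two terms on the right; this is the point at which the factor $(p-1)^{(p-1)/p}/p$ and the exponent of $|D|$ in \eqref{growth2} materialize. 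Assumption \eqref{growth2} is exactly the requirement that, for this choice of $\bar r$, the right-hand side stays strictly below $\bar r$, i.e. $\sup_{\Phi(u)\le\bar r}\Psi(u)<\bar r$, and that same $\bar r$ satisfies $\sqrt{2\bar r}=(\kappa p\,c_{2,\gamma}^{p+1}|D|^{1-\gamma})^{1/(1-p)}$.

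With $\bar r$ fixed, the direct method finishes the job. Since $\mathcal{J}$ is sequentially weakly lower semicontinuous and the closed ball $B:=\{\,\Phi(u)\le\bar r\,\}$ is convex, closed and bounded, hence weakly compact in the reflexive space $\mathbb{H}^1_0(D)$, the infimum of $\mathcal{J}$ over $B$ is attained at some $u_{0,\kappa}$. Because $\mathcal{J}(0)=\Phi(0)-\Psi(0)=0$ we have $\mathcal{J}(u_{0,\kappa})\le 0$, whereas on the sphere $\{\Phi(u)=\bar r\}$ one has $\mathcal{J}(u)=\bar r-\Psi(u)\ge\bar r-\sup_{\Phi(u)\le\bar r}\Psi(u)>0$ by the strict gap just obtained; hence $u_{0,\kappa}$ does not lie on the sphere, i.e. $\Phi(u_{0,\kappa})<\bar r$ and $u_{0,\kappa}$ is an interior point of $B$. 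Being a local minimum of the $C^1$ functional $\mathcal{J}$ on all of $\mathbb{H}^1_0(D)$, $u_{0,\kappa}$ is a critical point, hence a weak solution of $(P_\kappa)$; and $\Phi(u_{0,\kappa})<\bar r$ yields the announced bound $\|u_{0,\kappa}\|_{\mathbb{H}^1_0(D)}<\sqrt{2\bar r}=(\kappa p\,c_{2,\gamma}^{p+1}|D|^{1-\gamma})^{1/(1-p)}$. No assumption on $f$ at the origin is used; if moreover $f(0)\ne0$ then $u\equiv0$ solves nothing and $u_{0,\kappa}$ is automatically nontrivial.

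The step I expect to be the main obstacle is the bookkeeping: turning the two Hölder estimates, the two embedding constants $c_{1,\gamma},c_{2,\gamma}$ and the chosen level $\bar r$ into \emph{exactly} the constant $(p-1)^{(p-1)/p}\big/\big(p\,c_{1,\gamma}^{(p-1)/p}c_{2,\gamma}^{(p+1)/p}\big)$, the precise power of $|D|$ in \eqref{growth2}, and the sharp norm bound — there is essentially no slack, so the exponents in the Hölder steps and the choice of $\bar r$ must be made with care. The one genuinely conceptual point is the localization, i.e. that the minimizer over $B$ is interior and therefore a free critical point; this rests on the strict inequality $\sup_{\Phi\le\bar r}\Psi<\bar r$ (equivalently \eqref{growth2}) together with the $C^1$ regularity of $\Phi$ and $\Psi$.
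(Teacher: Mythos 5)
Your overall skeleton --- minimize the energy on a closed ball of the Folland--Stein space, show the minimizer is interior, conclude that it is a free critical point and hence a weak solution --- is exactly the skeleton of the paper's argument (the paper deduces Theorem \ref{FerraraMolicaBisci22Generale2} from the parametric Theorem \ref{MoReGeneral} with $\alpha=\beta=\kappa$ and $\lambda=1$). The gap is precisely at the quantitative step you flagged as ``bookkeeping'': because you localize by comparing with $u=0$, you need the strict inequality $\sup_{\Phi\le\bar r}\Psi<\bar r$, which in the norm radius $\varrho=\sqrt{2\bar r}$ reads
$$
\kappa\,c_{1,\gamma}|D|^{\frac{\gamma 2^*_h-1}{\gamma 2^*_h}}+\frac{\kappa}{p+1}\,c_{2,\gamma}^{p+1}|D|^{1-\gamma}\varrho^{p}<\frac{\varrho}{2},
$$
and this is \emph{not} implied by \eqref{growth2}. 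Indeed, \eqref{growth2} (equivalently, condition \eqref{MoReGeneral3} at $\lambda=1$; up to what appears to be a sign slip in the $(1-\gamma)/p$ exponent of $|D|$ in the statement) amounts exactly to
$$
\kappa\,c_{1,\gamma}|D|^{\frac{\gamma 2^*_h-1}{\gamma 2^*_h}}<\frac{p-1}{p}\,\varrho_0,\qquad \varrho_0:=\bigl(\kappa p\,c_{2,\gamma}^{p+1}|D|^{1-\gamma}\bigr)^{\frac{1}{1-p}},
$$
whereas at that same radius $\varrho_0$ (the one you must use to get the stated norm bound) your inequality requires $\kappa\,c_{1,\gamma}|D|^{\frac{\gamma 2^*_h-1}{\gamma 2^*_h}}<\bigl(\frac12-\frac{1}{p(p+1)}\bigr)\varrho_0$, and $\frac{p-1}{p}>\frac12-\frac{1}{p(p+1)}$ for every $p>1$. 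If you instead optimize the radius, the best threshold your estimate can deliver is smaller than the one in \eqref{growth2} by the factor $\frac12\bigl(\frac{p+1}{2}\bigr)^{1/(p-1)}<1$, and then the claimed bound on $\|u_{0,\kappa}\|$ is lost as well. So, as written, your argument proves the theorem only for a strictly smaller range of $\kappa$; under the stated hypothesis the key inequality $\sup_{\Phi\le\bar r}\Psi<\bar r$ simply does not follow from your estimates. (A minor additional slip: integrating \eqref{growth} gives $|F(t)|\le\kappa\bigl(|t|+\frac{|t|^{p+1}}{p+1}\bigr)$, not $\kappa(|t|+|t|^{p+1})$.)

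The paper avoids this loss by never comparing with $0$: via Lemma \ref{lemmino1} it only requires a bound on the incremental quotient of $\varrho\mapsto\sup_{\|v\|\le\varrho}\Psi(v)$ (a derivative-type condition, in which the factor $\frac{1}{p+1}$ cancels because $\frac{\varrho^{p+1}-(\varrho-\varepsilon)^{p+1}}{\varepsilon}\to(p+1)\varrho^{p}$), and Lemma \ref{lemmino2}, using the weak upper semicontinuity of $\Psi$, converts this into the existence of an \emph{interior} comparison point $w$ with $\Psi(w)$ nearly equal to the supremum over the ball; the strict energy comparison on the sphere is then made against $w$. Minimizing $\varrho\mapsto \kappa_{1,\gamma}\|\alpha\|+\kappa_{2,\gamma}^{p+1}\|\beta\|\varrho^{p}-\varrho$ is what produces exactly the constant $(p-1)^{\frac{p-1}{p}}\big/\bigl(p\,c_{1,\gamma}^{\frac{p-1}{p}}c_{2,\gamma}^{\frac{p+1}{p}}\bigr)$ and the radius $\varrho_0$ in the norm estimate. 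To close your proof you must either import this Ricceri--Bonanno-type local minimum mechanism (as the paper does through Lemmas \ref{lemmino1} and \ref{lemmino2}) or content yourself with the strictly more restrictive smallness condition on $\kappa$ that the naive sup bound yields.
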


For the sake of completeness we recall that very recently, in \cite{MR}, the existence of multiple solutions for parametric elliptic equations on Carnot groups has been
proved by exploiting the celebrated Ambrosetti-Rabinowitz condition and a local minimum result due to Ricceri (see \cite{R2}). We emphasize that in the present paper we do not require such technical assumption for the nonlinear term $f$. Moreover, the results obtained here are completely different from the one contained in \cite{MRe}
(see also \href{https://arxiv.org/abs/1705.10100}{arXiv:1705.10100 [math.AP]}), where critical subelliptic problems on Carnot groups was studied.\par

\indent The plan of the paper is as follows. Section \ref{section2} is devoted to our abstract framework and preliminaries. Next, in Section \ref{Section3}, Theorem \ref{MoReGeneral} and some preparatory results (see Lemmas \ref{lemmino1} and \ref{lemmino2}) are presented. In the last section, Theorem \ref{MoReGeneral} is proved.\par

\section{Abstract Framework}\label{section2}

In this section we briefly recall some basic facts on Carnot groups and the functional space~$S^1_0(D)$.\par
\smallskip
\textbf{Dilatations}. Let $(\erre^n,\circ)$ be a Lie group equipped with a family of group automorphisms, namely \textit{dilatations}, $\mathfrak{F}:=\{\delta_\eta\}_{\eta>0}$ such that, for every $\eta>0$, the map
$$
\delta_\eta:\prod_{k=1}^{r}\erre^{n_k}\rightarrow\prod_{k=1}^{r}\erre^{n_k}
$$
is given by
$$
\delta_\eta(\xi^{(1)},...,\xi^{(r)}):=(\eta \xi^{(1)}, \eta^{2}\xi^{(2)},...,\eta^r\xi^{(r)}),
$$
where $\xi^{(k)}\in \erre^{n_k}$ for every $k\in \{1,...,r\}$ and $\displaystyle \sum_{k=1}^{r}n_k=n$.\par
\smallskip
\textbf{Homogeneous dimension}. The structure $\mathbb{G}:=(\erre^n,\circ, \mathfrak{F})$ is called a \textit{homogeneous} group with \textit{homogeneous dimension}
\begin{equation}\label{dim}
\textrm{dim}_h{\mathbb{G}}:=\displaystyle \sum_{k=1}^{r}kn_k.
\end{equation}
\noindent From now on, we shall assume that $\textrm{dim}_h{\mathbb{G}}\geq 3$. We remark that, if $\textrm{dim}_h{\mathbb{G}}\leq 3$, then
necessarily $\mathbb{G}=(\erre^{\textrm{dim}_h{\mathbb{G}}},+)$.
Note that the number $\textrm{dim}_h{\mathbb{G}}$ is naturally associated to the family $\mathfrak{F}$ since, for every $\eta>0$, the Jacobian of the map
$$
\xi\mapsto \delta_\eta(\xi),\quad \forall\,\xi\in \erre^n
$$
equals $\eta^{\textrm{dim}_h{\mathbb{G}}}$.\par
\smallskip
\textbf{Stratification}. Let $\mathfrak{g}$ be the Lie algebra of left invariant vector fields on $\mathbb{G}$ and assume that $\mathfrak{g}$ is \textit{stratified}, that is:
$$
\displaystyle\mathfrak{g}=\bigoplus_{k=1}^{r}V_k,
$$
where the integer $r$ is called the \textit{step} of $\mathbb{G}$, $V_k$ is a linear subspace of $\displaystyle\mathfrak{g}$, for every $k\in \{1,...,r\}$, and
\begin{itemize}
\item[] $\textrm{dim}V_k=n_k$, for every $k\in \{1,...,r\}$;
\item[] $[V_1,V_k]=V_{k+1}$, for $1\leq k\leq r-1$, and $[V_1,V_r]=\{0\}$.
\end{itemize}
In this setting the symbol $[V_1,V_k]$ denotes the subspace of $\mathfrak{g}$ generated by the commutators $[X,Y]$, where $X\in V_1$ and $Y\in V_k$.\par
\smallskip
\textbf{The notion of Carnot group and subelliptic Laplacian on $\mathbb{G}$}. A\textit{Carnot group} is a homogeneous group $\mathbb{G}$ such that the Lie algebra $\mathfrak{g}$ associated to $\mathbb{G}$ is stratified.\par
 Moreover, the \textit{subelliptic Laplacian} operator on $\mathbb{G}$ is the second-order differential operator, given by
$$
\Delta_{\mathbb{G}}:=\displaystyle\sum_{k=1}^{n_1}X_k^2,
$$
where $\{X_1,...,X_{n_1}\}$ is a basis of $V_1$. We shall denote by $$\nabla_\mathbb{G}:=(X_1,...,X_{n_1})$$ the
related \textit{horizontal gradient}.\par
\smallskip
\indent \textbf{Critical Sobolev inequality}. A crucial role in the functional analysis on Carnot groups is played by the
following Sobolev-type inequality
\begin{equation}\label{folland}
\int_{D}|u(\xi)|^{2^*}\,d\xi\leq C\int_{D}|\nabla_{\mathbb{G}} u(\xi)|^2\,d\xi,\,\quad\forall\, u\in C^{\infty}_0(D)
\end{equation}
due to Folland (see \cite{Fo}). In the above expression $C$ is a positive constant (independent of $u$) and
$$
2^{*}:=\frac{2\textrm{dim}_h{\mathbb{G}}}{\textrm{dim}_h{\mathbb{G}}-2},
$$
is the \textit{critical Sobolev exponent}. Inequality (\ref{folland}) ensures that if $D$ is a bounded open (smooth) subset of $\mathbb{G}$, then the function
\begin{equation}\label{norm}
u\mapsto \|u\|_{S^1_0(D)}:=\left(\int_{D}|\nabla_{\mathbb{G}} u(\xi)|^2\,d\xi\right)^{1/2}
\end{equation}
is a norm in $C^{\infty}_0({D})$.\par
\smallskip
\textbf{Folland-Stein space}. We shall denote by $S^1_0(D)$ the \textit{Folland-Stein space} defined
as the completion of $C^{\infty}_0({D})$ with respect to the norm $\|\cdot\|_{S^1_0(D)}$. The exponent $2^{*}$ is critical for $\Delta_{\mathbb{G}}$ since, as in the classical Laplacian setting, the embedding
$S^1_0(D)\hookrightarrow L^q(D)$ is compact when $1\leq q<2^{*}$, while it is only continuous if $q=2^{*}$, see Folland and Stein \cite{FoSte} and the survey paper \cite{Lanco} for related facts.\par
\smallskip
\indent \textbf{The Heisenberg group}. The simplest example of Carnot
group is provided by the \textit{Heisenberg group}
$\mathbb{H}^n:=(\erre^{2n+1},\circ)$, where, for every
$$
p:=(p_1,...,p_{2n},p_{2n+1})\,\,\,\mbox{and}\,\,\, q:=(q_1,...,q_{2n},q_{2n+1})\in \mathbb{H}^n,
$$
the usual group operation $\circ:\mathbb{H}^n\times \mathbb{H}^n\rightarrow \mathbb{H}^n$ is given by
$$
p\circ q:=\left(p_1+q_1,...,p_{2n}+q_{2n},p_{2n+1}+q_{2n+1}+\frac{1}{2}\displaystyle\sum_{k=1}^{2n}(p_kq_{k+n}-p_{k+n}q_k)\right)
$$
and the family of dilatations has the following form
$$
\delta_\eta(p):=(\eta p_1,...,\eta p_{2n},\eta^2p_{2n+1}),\quad \forall\, \eta>0.
$$
Thus $\mathbb{H}^n$ is a $(2n+1)$-dimensional group and by (\ref{dim}) it follows that
$$
\textrm{dim}_h{\mathbb{H}^n}=2n+2,
$$
and
$$
2_h^{*}:=\displaystyle 2\left(\frac{n+1}{n}\right).
$$
\indent The Lie algebra of left invariant vector fields on $\mathbb{H}^{n}$ is denoted by $\mathfrak{h}$ and its standard basis
is given by \par
$$
X_k:=\partial_k-\frac{p_{n+k}}{2}\partial_{2n+1},\quad k\in \{1,...,n\}
$$
$$
Y_k:=\partial_{n+k}-\frac{p_{k}}{2}\partial_{2n+1},\quad k\in \{1,...,n\}
$$
$$
T:=\partial_{2n+1}.
$$

\noindent In such a case, the only non-trivial commutators relations are
$$
[X_k,Y_k]=T,\quad \forall\, k\in \{1,...,n\}.
$$
Finally, the stratification of $\mathfrak{h}$ is given by
$$
\mathfrak{h}=\textrm{span}\{X_1,...,X_n,Y_1,...,Y_n\}\oplus \textrm{span}\{T\}.
$$
We denote by $\mathbb{H}^1_0(D)$ the Folland-Stein space in the Heisenberg group setting, as well as by $\Delta_{\mathbb{H}^n}$ the {Kohn-Laplacian} operator on $\mathbb{H}^n$.\par
\indent We cite the monograph \cite{BLU} for a nice introduction to Carnot groups and \cite{MRS} for related topics on variational methods used in this paper.
\section{The Main Result and some preliminary Lemmas}\label{Section3}
The aim of this section is to prove that, under natural assumptions on the nonlinear term $f$, weak solutions to problem $(P_{\lambda}^{f})$ below do exist.
More precisely, the main result is an existence theorem for
equations
driven by the subelliptic Laplacian, as stated here below.
\begin{theorem}\label{MoReGeneral}
Let $D$ be a smooth and bounded domain of the {Carnot group}
$\mathbb{G}$ of homogeneous dimension ${\rm dim}_h{\mathbb{G}}\geq 3$ and let
$f:D\times\erre\rightarrow\erre$ be a Carath\'{e}odory function such that
\begin{equation}\label{MoReGeneralg}
|f( \xi, t)|\leq \alpha(\xi)+\beta(\xi)|t|^{p}\,\, \mbox{almost everywhere in}\,\, D\times\erre,
\end{equation}
where
$$\alpha\in L^{\frac{\gamma 2^{*}}{\gamma 2^{*}-1}}(D)\qquad\mbox{and}\qquad \beta\in L^{\frac{1}{1-\gamma}}(D)$$
with $\gamma\in (2/2^*,1)$, $p\in (1,\gamma 2^*-1)$, and $2^*:=\displaystyle\frac{2{\rm dim}_h{\mathbb{G}}}{{\rm dim}_h{\mathbb{G}}-3}$. Furthermore, let
\begin{equation}\label{MoReGeneral3}
0<\lambda<\frac{(p-1)^{\frac{p-1}{p}}}{p\kappa_{1,\gamma}^{\frac{p-1}{p}}\kappa_{2,\gamma}^{\frac{p+1}{p}}\|\alpha\|^{\frac{p-1}{p}}_{L^{\frac{\gamma 2^{*}}{\gamma 2^{*}-1}}(D)}\|\beta\|_{L^{\frac{1}{1-\gamma}}(D)}^{\frac{1}{p}}},
\end{equation}
 where $\kappa_{1,\gamma}$ and $\kappa_{2,\gamma}$ denote the embedding constants of the Folland-Stein space $S^1_0(D)$ in $L^{{\gamma 2^{*}}}(D)$ and $L^{\frac{p+1}{\gamma}}(D)$, respectively. Then
the following subelliptic parametric problem
$$
(P_{\lambda}^f)\,\,\,\,\,\,\,\,\,\,\left\{
\begin{array}{ll}
-\Delta_{\mathbb{G}} u=\displaystyle \lambda f(\xi,u) &  \mbox{\rm in } D\\
u|_{\partial D}=0, &
\end{array}\right.
$$
has a weak solution $u_{0,\lambda}\in S^1_0(D)$ and $$\|u_{0,\lambda}\|_{S^1_0(D)}<\left(\lambda p\kappa_{2,\gamma}^{p+1}\|\beta\|_{L^{\frac{1}{1-\gamma}}(D)}\right)^{\frac{1}{1-p}}.$$
\end{theorem}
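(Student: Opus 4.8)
The plan is to apply the direct method of the calculus of variations on a ball whose radius is tuned to the bound \eqref{MoReGeneral3}. Weak solutions of $(P_{\lambda}^{f})$ coincide with the critical points of the energy functional $\mathcal{J}_{\lambda}\colon S^1_0(D)\to\erre$ given by
\[
\mathcal{J}_{\lambda}(u):=\frac12\|u\|_{S^1_0(D)}^{2}-\lambda\int_{D}F(\xi,u(\xi))\,d\xi,\qquad F(\xi,t):=\int_0^{t}f(\xi,s)\,ds,
\]
so I would first invoke the preparatory Lemmas \ref{lemmino1} and \ref{lemmino2} to ensure that $\mathcal{J}_{\lambda}$ is well defined and of class $C^{1}$ on $S^1_0(D)$, with
\[
\langle\mathcal{J}_{\lambda}'(u),v\rangle=\int_{D}\nabla_{\mathbb{G}}u\cdot\nabla_{\mathbb{G}}v\,d\xi-\lambda\int_{D}f(\xi,u)v\,d\xi\qquad (v\in S^1_0(D)),
\]
and that the nonlinear part $u\mapsto\int_{D}F(\xi,u)\,d\xi$ is sequentially weakly continuous. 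The growth condition \eqref{MoReGeneralg} gives $|F(\xi,t)|\le\alpha(\xi)|t|+\beta(\xi)|t|^{p+1}$ and $|f(\xi,t)t|\le\alpha(\xi)|t|+\beta(\xi)|t|^{p+1}$; combining these with H\"older's inequality for the conjugate pairs $\bigl(\tfrac{\gamma 2^{*}}{\gamma 2^{*}-1},\gamma 2^{*}\bigr)$ and $\bigl(\tfrac1{1-\gamma},\tfrac1\gamma\bigr)$, and then with the continuous Folland--Stein embeddings $S^1_0(D)\hookrightarrow L^{\gamma 2^{*}}(D)$ and $S^1_0(D)\hookrightarrow L^{(p+1)/\gamma}(D)$, yields, writing $a:=\|\alpha\|_{L^{\gamma 2^{*}/(\gamma 2^{*}-1)}(D)}$ and $b:=\|\beta\|_{L^{1/(1-\gamma)}(D)}$, the estimate
\[
\Bigl|\int_{D}F(\xi,u)\,d\xi\Bigr|\le\kappa_{1,\gamma}\,a\,\|u\|_{S^1_0(D)}+\kappa_{2,\gamma}^{p+1}\,b\,\|u\|_{S^1_0(D)}^{p+1}\qquad (u\in S^1_0(D)),
\]
and the same upper bound for $\int_{D}f(\xi,u)u\,d\xi$. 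The two embeddings are in fact compact because $\gamma 2^{*}<2^{*}$ and $(p+1)/\gamma<2^{*}$ (here the ranges $\gamma\in(2/2^{*},1)$ and $p\in(1,\gamma 2^{*}-1)$ enter), and this compactness underlies the sequential weak continuity above.

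Next I would set $\rho:=\bigl(\lambda p\,\kappa_{2,\gamma}^{p+1}\,b\bigr)^{1/(1-p)}$ and minimize $\mathcal{J}_{\lambda}$ over the closed ball $\overline{B_{\rho}}:=\{u\in S^1_0(D):\|u\|_{S^1_0(D)}\le\rho\}$. Since $S^1_0(D)$ is a Hilbert space, $\overline{B_{\rho}}$ is convex, bounded and closed, hence sequentially weakly compact, and $\mathcal{J}_{\lambda}$ is sequentially weakly lower semicontinuous on it (the quadratic term is convex and strongly continuous, the nonlinear term is sequentially weakly continuous); so by the Weierstrass theorem $\mathcal{J}_{\lambda}$ attains its minimum over $\overline{B_{\rho}}$ at some $u_{0}=u_{0,\lambda}$ with $\|u_{0}\|_{S^1_0(D)}\le\rho$.

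The heart of the argument is to upgrade this to the \emph{strict} inequality $\|u_{0}\|_{S^1_0(D)}<\rho$: once this is known, a whole $S^1_0(D)$-neighbourhood of $u_{0}$ lies inside $\overline{B_{\rho}}$, so $u_{0}$ is a local minimizer of $\mathcal{J}_{\lambda}$, hence $\mathcal{J}_{\lambda}'(u_{0})=0$, i.e.\ $u_{0}$ is a weak solution of $(P_{\lambda}^{f})$ satisfying the announced bound. Suppose, for contradiction, that $\|u_{0}\|_{S^1_0(D)}=\rho$. Because $\overline{B_{\rho}}$ is convex and $0\in\overline{B_{\rho}}$, the real function $t\mapsto\mathcal{J}_{\lambda}\bigl((1-t)u_{0}\bigr)$ on $[0,1]$ has a minimum at $t=0$, so its right derivative there, which equals $-\langle\mathcal{J}_{\lambda}'(u_{0}),u_{0}\rangle$, is non-negative; hence $\langle\mathcal{J}_{\lambda}'(u_{0}),u_{0}\rangle\le0$, that is,
\[
\|u_{0}\|_{S^1_0(D)}^{2}\le\lambda\int_{D}f(\xi,u_{0})u_{0}\,d\xi\le\lambda\kappa_{1,\gamma}\,a\,\|u_{0}\|_{S^1_0(D)}+\lambda\kappa_{2,\gamma}^{p+1}\,b\,\|u_{0}\|_{S^1_0(D)}^{p+1}.
\]
Dividing by $\|u_{0}\|_{S^1_0(D)}=\rho>0$ and using the defining identity $\lambda\kappa_{2,\gamma}^{p+1}b\,\rho^{p-1}=1/p$ gives $\rho\le\lambda\kappa_{1,\gamma}a+\rho/p$, i.e.\ $\rho\le\frac{p}{p-1}\lambda\kappa_{1,\gamma}a$. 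Recalling $\rho=\bigl(\lambda p\,\kappa_{2,\gamma}^{p+1}b\bigr)^{-1/(p-1)}$ and raising both (positive) sides to the power $-(p-1)$, this is equivalent to
\[
\lambda\ge\frac{(p-1)^{\frac{p-1}{p}}}{p\,\kappa_{1,\gamma}^{\frac{p-1}{p}}\,\kappa_{2,\gamma}^{\frac{p+1}{p}}\,a^{\frac{p-1}{p}}\,b^{\frac1p}},
\]
which contradicts \eqref{MoReGeneral3}. Therefore $\|u_{0}\|_{S^1_0(D)}<\rho=\bigl(\lambda p\,\kappa_{2,\gamma}^{p+1}b\bigr)^{1/(1-p)}$, and the proof is complete; the Heisenberg statement in the introduction follows by specializing $\mathbb{G}=\mathbb{H}^{n}$, $2^{*}=2^{*}_{h}$, $\lambda=1$, $\alpha\equiv\beta\equiv\kappa$. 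The step I expect to be the real obstacle is not this last elementary chain of inequalities but the content of Lemmas \ref{lemmino1} and \ref{lemmino2}: the $C^{1}$ regularity of $\mathcal{J}_{\lambda}$ and, above all, the sequential weak continuity of $u\mapsto\int_{D}F(\xi,u)\,d\xi$ under only the Carath\'eodory growth \eqref{MoReGeneralg}, which rests on an almost-everywhere convergence plus equi-integrability (Vitali) argument crucially exploiting the compactness of the subcritical Folland--Stein embeddings, and which is precisely why the exponent ranges $\gamma\in(2/2^{*},1)$ and $p\in(1,\gamma 2^{*}-1)$ are needed.
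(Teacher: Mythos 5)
Your argument is correct, and it reaches the conclusion by a genuinely different route from the paper. The paper never touches the Euler equation on the sphere: it estimates the difference quotient $\Lambda_\lambda(\varepsilon,\varrho)$ of $\sup_{\Phi^{-1}([0,\varrho])}\Psi_\lambda$, feeds the resulting bound $\kappa_{1,\gamma}\lambda\|\alpha\|+\kappa_{2,\gamma}^{p+1}\lambda\|\beta\|\varrho^{p}<\varrho$ at $\varrho=\varrho_{0,\lambda}$ into Lemmas \ref{lemmino1} and \ref{lemmino2}, and thereby produces a comparison element $w_\lambda$ strictly inside the ball with $\Psi_\lambda(u)\le\sup_{\Phi^{-1}([0,\varrho_{0,\lambda}])}\Psi_\lambda<\Psi_\lambda(w_\lambda)+\tfrac12(\varrho_{0,\lambda}^2-\|w_\lambda\|^2)$; a boundary minimizer would then have energy larger than $\mathcal{J}_\lambda(w_\lambda)$, a contradiction. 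You instead minimize $\mathcal{J}_\lambda$ on the same ball $\overline{B_{\varrho_{0,\lambda}}}$ and rule out a boundary minimizer via the first-order optimality condition along $t\mapsto(1-t)u_0$, i.e. $\langle\mathcal{J}_\lambda'(u_0),u_0\rangle\le0$, combined with the same H\"older/embedding estimate and the algebra defining $\varrho_{0,\lambda}$ (your computation reproducing the threshold \eqref{MoReGeneral3} is exact). Both proofs rest on the identical quantitative ingredients (the two subcritical embeddings, the function $\varphi_\lambda$ and its minimizer $\varrho_{0,\lambda}$, and the range of $\lambda$); yours is more elementary and bypasses the Ricceri-type machinery of Lemmas \ref{lemmino1}--\ref{lemmino2}, while the paper's formulation isolates an abstract local-minimum criterion that can be reused for other functionals. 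One small inaccuracy: Lemmas \ref{lemmino1} and \ref{lemmino2} do not provide the $C^1$ regularity of $\mathcal{J}_\lambda$ or the sequential weak continuity of $\Psi_\lambda$ (they are implications between the two sup-type inequalities); those regularity facts are asserted in the paper, as in your write-up, from the Carath\'eodory growth \eqref{MoReGeneralg} and the compact embeddings, so the mis-attribution is cosmetic and does not affect the validity of your proof.
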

We recall that a \textit{weak solution} for the problem $(P_{\lambda}^{f})$, is a function $u:D\to \RR$ such that
$$
\mbox{
$\left\{\begin{array}{lll}
$$\displaystyle\int_{D} \langle\nabla_\mathbb{G}u(\xi),\nabla_\mathbb{G}\varphi(\xi)\rangle\,d\xi\\
\qquad \qquad \qquad\qquad = \displaystyle\lambda\displaystyle\int_D f(\xi,u(\xi))\varphi(\xi)d\xi, \,\,\,\,\,\,\,\forall\,\varphi \in S^1_0(D)$$\\
$$u\in S^1_0(D)$$.
\end{array}
\right.$}
$$

Let us consider the functional $\mathcal{J}_{\lambda}:S^1_0(D)\to \RR$ defined by
\begin{equation}\label{Funzionale}
\mathcal{J}_{\lambda}(u):=\frac{1}{2}\|u\|_{S^1_0(D)}^2-\lambda\displaystyle\int_D F(\xi,u(\xi))d\xi,\quad \forall\, u\in S^1_0(D)\,
\end{equation}
where $\lambda\in \erre$ and, as usual, we set $\displaystyle F(\xi,t):=\int_0^{t}f(\xi,\tau)d\tau$.\par
Note that, under our growth condition on $f$, the functional $\mathcal{J}_{\lambda}\in C^1(S^1_0(D))$ and its derivative at $u\in S^1_0(D)$ is given by
$$
\langle \mathcal{J}'_{\lambda}(u), \varphi\rangle = \displaystyle\int_{D} \langle\nabla_\mathbb{G}u(\xi),\nabla_\mathbb{G}\varphi(\xi)\rangle\,d\xi-\lambda\displaystyle\int_D f(\xi,u(\xi))\varphi(\xi)d\xi,
$$
for every $\varphi \in S^1_0(D)$.\par
 Thus the weak solutions of problem $(P_{\lambda}^{f})$ are exactly the critical points of the energy functional $\mathcal{J}_{\lambda}$.

Fix $\lambda>0$ and denote
$$
\Phi(u):=\|u\|_{S^1_0(D)}
\quad\mbox{and}\quad\Psi_\lambda(u):=\lambda\int_{D}F(\xi,u(\xi))d\xi,
$$
for every $u\in S^1_0(D)$.\par
Note that, thanks to condition \eqref{MoReGeneralg}, the operator $\Psi$ is well defined and sequentially weakly (upper) continuous. So the operator $\mathcal{J}_{\lambda}$ is sequentially weakly lower semicontinuous on $S^1_0(D)$.
With the above notations we can prove the next two lemmas that will be crucial in the sequel.

\begin{lemma}\label{lemmino1}
Let $\lambda>0$ and suppose that
\begin{equation}\label{Ga1}
\limsup_{\varepsilon\rightarrow 0^+}\frac{\displaystyle \sup_{v\in\Phi^{-1}([0,\varrho_0])}\Psi_\lambda(v)-\sup_{v\in \Phi^{-1}([0,\varrho_0-\varepsilon])}\Psi_\lambda(v)}{\varepsilon}<\varrho_0,
\end{equation}
for some $\varrho_0>0$. Then
\begin{equation}\label{Ga2}
\inf_{\sigma<\varrho_0}\frac{\displaystyle \sup_{v\in\Phi^{-1}([0,\varrho_0])}\Psi_{\lambda}(v)-\sup_{v\in  \Phi^{-1}([0,\sigma])}\Psi_\lambda(v)}{\varrho_0^2-\sigma^2}<\frac{1}{2}.
\end{equation}
\end{lemma}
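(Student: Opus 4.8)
The plan is to compress the functional $\Psi_\lambda$ into the single real variable function
$$\psi(\sigma):=\sup_{v\in\Phi^{-1}([0,\sigma])}\Psi_\lambda(v),\qquad \sigma\ge 0,$$
and to argue entirely on $\psi$. Since $\Phi^{-1}([0,\sigma])=\{v\in S^1_0(D):\|v\|_{S^1_0(D)}\le\sigma\}$ increases with $\sigma$, the function $\psi$ is non-decreasing; it is also finite-valued, because the subcritical bound \eqref{MoReGeneralg} makes $\Psi_\lambda$ bounded on bounded subsets of $S^1_0(D)$ (and in any case the left-hand side of \eqref{Ga1} is only a meaningful hypothesis when $\psi$ is finite near $\varrho_0$). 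In this notation \eqref{Ga1} reads
$$\limsup_{\varepsilon\to 0^+}\frac{\psi(\varrho_0)-\psi(\varrho_0-\varepsilon)}{\varepsilon}<\varrho_0,$$
while the conclusion \eqref{Ga2} amounts to exhibiting some $\sigma\in[0,\varrho_0)$ with $\big(\psi(\varrho_0)-\psi(\sigma)\big)/(\varrho_0^2-\sigma^2)<1/2$.

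First I would fix a number $\ell$ with $\limsup_{\varepsilon\to 0^+}\big(\psi(\varrho_0)-\psi(\varrho_0-\varepsilon)\big)/\varepsilon<\ell<\varrho_0$, which is possible by the hypothesis, and then use the definition of $\limsup$ to obtain $\delta>0$ such that
$$\psi(\varrho_0)-\psi(\varrho_0-\varepsilon)<\ell\,\varepsilon\qquad\text{for every }\varepsilon\in(0,\delta).$$
Next, for such $\varepsilon$ (also imposing $\varepsilon<\varrho_0$) I would set $\sigma:=\varrho_0-\varepsilon\in[0,\varrho_0)$ and invoke the elementary identity $\varrho_0^2-\sigma^2=\varepsilon(2\varrho_0-\varepsilon)$ to estimate
$$\frac{\psi(\varrho_0)-\psi(\sigma)}{\varrho_0^2-\sigma^2}=\frac{\psi(\varrho_0)-\psi(\varrho_0-\varepsilon)}{\varepsilon(2\varrho_0-\varepsilon)}<\frac{\ell}{2\varrho_0-\varepsilon}.$$
Since $\ell<\varrho_0$, the last quantity is strictly below $1/2$ precisely when $\varepsilon<2(\varrho_0-\ell)$; hence any choice of $\varepsilon\in\big(0,\min\{\delta,\varrho_0,2(\varrho_0-\ell)\}\big)$ produces, via $\sigma=\varrho_0-\varepsilon$, a competitor in the infimum of \eqref{Ga2} lying strictly below $1/2$, which proves the lemma.

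There is essentially no analytic obstacle here: once $\Psi_\lambda$ is replaced by its monotone envelope $\psi$, the statement is a one-variable manipulation. The only mild points of care are the finiteness of $\psi$ on $[0,\varrho_0]$ (handled above) and the bookkeeping of the three smallness constraints placed on $\varepsilon$; neither is a genuine difficulty.
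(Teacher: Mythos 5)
Your proposal is correct and follows essentially the same route as the paper: both arguments reduce to the observation that $\varrho_0^2-(\varrho_0-\varepsilon)^2=\varepsilon(2\varrho_0-\varepsilon)\approx 2\varrho_0\varepsilon$, so hypothesis \eqref{Ga1} forces the quotient with quadratic denominator below $\tfrac12$ for $\sigma=\varrho_0-\varepsilon$ with $\varepsilon$ small. Your introduction of the intermediate value $\ell$ and the explicit constraint $\varepsilon<2(\varrho_0-\ell)$ is just a more explicit bookkeeping of the limsup step that the paper handles by passing to the limit of the factor $\frac{-\varepsilon/\varrho_0}{\varrho_0[(1-\varepsilon/\varrho_0)^2-1]}\to\frac{1}{2\varrho_0}$.
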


\begin{proof}
First, by condition \eqref{Ga1} one has
\begin{equation}\label{Ga1GG}
\limsup_{\varepsilon\rightarrow 0^+}\frac{\displaystyle \sup_{v\in\Phi^{-1}([0,\varrho_0])}\Psi_\lambda(v)-\sup_{v\in \Phi^{-1}([0,\varrho_0-\varepsilon])}\Psi_\lambda(v)}{\varrho^2_0-(\varrho_0-\varepsilon)^2}<\frac{1}{2}.
\end{equation}
Indeed, if $\varepsilon \in (0,\varrho_0)$, one has \par
\smallskip
$\displaystyle
 \frac{\displaystyle \sup_{v\in\Phi^{-1}([0,\varrho_0])}\Psi_\lambda(v)-\sup_{v\in \Phi^{-1}([0,\varrho_0-\varepsilon])}\Psi_\lambda(v)}{\varrho^2_0-(\varrho_0-\varepsilon)^2}
 $
 $$
 =\frac{\displaystyle \sup_{v\in\Phi^{-1}([0,\varrho_0])}\Psi_\lambda(v)-\sup_{v\in \Phi^{-1}([0,\varrho_0-\varepsilon])}\Psi_\lambda(v)}{\varepsilon}\times
 $$
 $$
 \times\frac{-\varepsilon /\varrho_0}{\varrho_0\left[\left(1-\displaystyle\frac{\varepsilon}{\varrho_0}\right)^2-1\right]},
 $$
and
$$
\lim_{\varepsilon\rightarrow 0^+}\frac{-\varepsilon /\varrho_0}{\varrho_0\left[\left(1-\displaystyle\frac{\varepsilon}{\varrho_0}\right)^2-1\right]}=\frac{1}{2\varrho_0}.
$$
Now, by \eqref{Ga1GG} there exists $\bar\varepsilon>0$ such that
$$
\frac{\displaystyle \sup_{v\in\Phi^{-1}([0,\varrho_0])}\Psi_\lambda(v)-\sup_{v\in \Phi^{-1}([0,\varrho_0-\varepsilon])}\Psi_\lambda(v)}{\varrho^2_0-(\varrho_0-\varepsilon)^2}<\frac{1}{2},
$$
for every $\varepsilon\in ]0,\bar\varepsilon[$. Setting $\sigma_0:=\varrho_0-\varepsilon_0$ (with $\varepsilon_0\in ]0,\bar\varepsilon[$), it follows that
$$
\frac{\displaystyle \sup_{v\in\Phi^{-1}([0,\varrho_0])}\Psi_{\lambda}(v)-\sup_{v\in  \Phi^{-1}([0,\sigma_0])}\Psi_\lambda(v)}{\varrho_0^2-\sigma^2_0}<\frac{1}{2},
$$
and thus inequality \eqref{Ga2} is verified.
\end{proof}

\begin{lemma}\label{lemmino2}
Let $\lambda>0$ and suppose that condition \eqref{Ga2} holds. Then
\begin{equation}\label{VGa1}
\inf_{u\in\Phi^{-1}([0,\varrho_0))}
\frac{\displaystyle\sup_{v\in\Phi^{-1}([0,\varrho_0])}\Psi_\lambda(v)-\Psi_\lambda(u)}{\varrho_0^2- \|u\|_{S^1_0(D)}^2}<\frac{1}{2}.
\end{equation}
\end{lemma}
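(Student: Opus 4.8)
The plan is to deduce \eqref{VGa1} from \eqref{Ga2} by a direct selection argument, with no extra machinery beyond what the excerpt already records. First I would unwind the infimum in \eqref{Ga2}: by definition of infimum there exists $\sigma_0\in[0,\varrho_0)$ with
\[
\frac{\displaystyle\sup_{v\in\Phi^{-1}([0,\varrho_0])}\Psi_{\lambda}(v)-\sup_{v\in\Phi^{-1}([0,\sigma_0])}\Psi_\lambda(v)}{\varrho_0^2-\sigma_0^2}<\frac{1}{2}.
\]
Abbreviate $M:=\sup_{v\in\Phi^{-1}([0,\varrho_0])}\Psi_\lambda(v)$ and $m_0:=\sup_{v\in\Phi^{-1}([0,\sigma_0])}\Psi_\lambda(v)$. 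Since $\Psi_\lambda$ is sequentially weakly upper semicontinuous on the weakly (sequentially) compact closed balls $\Phi^{-1}([0,\sigma_0])\subseteq\Phi^{-1}([0,\varrho_0])$ of the Hilbert space $S^1_0(D)$, both suprema are attained and finite, and $0\le m_0\le M<+\infty$.

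Next I would select $u\in\Phi^{-1}([0,\sigma_0])$ with $\Psi_\lambda(u)=m_0$. Because $\sigma_0<\varrho_0$, the closed ball $\Phi^{-1}([0,\sigma_0])$ is contained in the open ball $\Phi^{-1}([0,\varrho_0))$, so $u$ is admissible in \eqref{VGa1}; moreover $\|u\|_{S^1_0(D)}^2\le\sigma_0^2<\varrho_0^2$, hence $\varrho_0^2-\|u\|_{S^1_0(D)}^2\ge\varrho_0^2-\sigma_0^2>0$. Since the numerator $M-\Psi_\lambda(u)=M-m_0\ge0$ is nonnegative, enlarging the positive denominator only decreases the quotient, so
\[
\inf_{w\in\Phi^{-1}([0,\varrho_0))}\frac{M-\Psi_\lambda(w)}{\varrho_0^2-\|w\|_{S^1_0(D)}^2}\le\frac{M-\Psi_\lambda(u)}{\varrho_0^2-\|u\|_{S^1_0(D)}^2}\le\frac{M-m_0}{\varrho_0^2-\sigma_0^2}<\frac{1}{2},
\]
which is exactly \eqref{VGa1}. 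If one prefers to avoid invoking attainment of $m_0$, the same conclusion follows by instead choosing $u\in\Phi^{-1}([0,\sigma_0])$ with $\Psi_\lambda(u)>m_0-\delta$ for a sufficiently small $\delta>0$: then the middle quotient is bounded by $\frac{(M-m_0)+\delta}{\varrho_0^2-\sigma_0^2}$, which is still $<\frac12$ once $\delta$ is small, by the strict inequality above.

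There is no real obstacle here; the argument is elementary. The two points that require a little care are (i) that the quotient is monotone with respect to the denominator only because the numerator is nonnegative, which is precisely where $m_0\le M$ enters, and (ii) that $\Phi^{-1}([0,\sigma_0])\subseteq\Phi^{-1}([0,\varrho_0))$ for $\sigma_0<\varrho_0$, which is what makes the selected $u$ admissible and simultaneously keeps the denominator bounded below by $\varrho_0^2-\sigma_0^2>0$. The transition from $\inf_{\sigma<\varrho_0}$ in \eqref{Ga2} to one fixed $\sigma_0$ is just the definition of the infimum.
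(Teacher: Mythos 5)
Your proof is correct and follows essentially the same route as the paper: both arguments unwind the infimum in \eqref{Ga2} to a fixed $\sigma_0<\varrho_0$ and then insert a (near-)maximizer of $\Psi_\lambda$ on $\Phi^{-1}([0,\sigma_0])$ into the quotient \eqref{VGa1}. The only difference is cosmetic: the paper forces the selected point to satisfy $\|u_0\|_{S^1_0(D)}=\sigma_0$ by invoking the identity $\sup_{\Phi^{-1}([0,\sigma_0])}\Psi_\lambda=\sup_{\|v\|_{S^1_0(D)}=\sigma_0}\Psi_\lambda$, whereas your observation that the numerator is nonnegative (so enlarging the denominator only helps) makes that step unnecessary.
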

\begin{proof}
Assumption \eqref{Ga2} yields
\begin{equation}\label{Ga123}
\displaystyle \sup_{v\in\Phi^{-1}([0,\sigma_0])}\Psi_{\lambda}(v)>\displaystyle \sup_{v\in\Phi^{-1}([0,\varrho_0])}\Psi_{\lambda}(v)-\frac{1}{2}(\varrho_0^2-\sigma^2_0),
\end{equation}
for some $0<\sigma_0<\varrho_0$. Thanks to the weakly regularity of the functional $\Psi_\lambda$, since
\begin{equation*}\label{Ga1234}
\displaystyle \sup_{v\in\Phi^{-1}([0,\sigma_0])}\Psi_{\lambda}(v)=\displaystyle \sup_{\|v\|_{S^1_0(D)}=\sigma_0}\Psi_{\lambda}(v),
\end{equation*}
by \eqref{Ga123} there exists $u_0\in S^1_0(D)$ with $\|u_0\|_{S^1_0(D)}=\sigma_0$ such that
\begin{equation}\label{Ga1235}
\displaystyle \Psi_{\lambda}(u_0)>\displaystyle \sup_{v\in\Phi^{-1}([0,\varrho_0])}\Psi_{\lambda}(v)-\frac{1}{2}(\varrho_0^2-\sigma^2_0),
\end{equation}
that is,
\begin{equation}\label{VGa1Finale}
\frac{\displaystyle\sup_{v\in\Phi^{-1}([0,\varrho_0])}\Psi_\lambda(v)-\Psi_\lambda(u_0)}{\varrho_0^2- \|u_0\|_{S^1_0(D)}^2}<\frac{1}{2},
\end{equation}
with $\|u_0\|_{S^1_0(D)}=\sigma_0$. The proof is now complete.
\end{proof}

\section{Proof of Theorem \ref{MoReGeneral}}
For the proof of our result, before, we note that
problem~$(P_{\lambda}^{f})$ has a variational structure. Indeed, it is the Euler-Lagrange equation of the functional $\mathcal{J}_{\lambda}$.\par
 Hence, fix
 \begin{equation}\label{range}
 \lambda\in \left(0,\frac{(p-1)^{\frac{p-1}{p}}}{p\kappa_{1,\gamma}^{\frac{p-1}{p}}\kappa_{2,\gamma}^{\frac{p+1}{p}}\|\alpha\|^{\frac{p-1}{p}}_{L^{\frac{\gamma 2^{*}}{\gamma 2^{*}-1}}(D)}\|\beta\|_{L^{\frac{1}{1-\gamma}}(D)}^{\frac{1}{p}}}\right),
 \end{equation}
 and let us consider $0<\varepsilon<\varrho$.
 Setting
 $$
 \Lambda_\lambda(\varepsilon,\varrho):=\displaystyle\frac{\displaystyle \sup_{v\in\Phi^{-1}([0,\varrho])}\Psi_\lambda(v)-\sup_{v\in \Phi^{-1}([0,\varrho-\varepsilon])}\Psi_\lambda(v)}{\varepsilon},
 $$
 one has
 $$
 \Lambda_\lambda(\varepsilon,\varrho)\leq\frac{1}{\varepsilon}\left|\displaystyle \sup_{v\in\Phi^{-1}([0,\varrho])}\Psi_\lambda(v)-\sup_{v\in \Phi^{-1}([0,\varrho-\varepsilon])}\Psi_\lambda(v)\right|.
 $$
Moreover, it follows that
 $$
 \Lambda_\lambda(\varepsilon,\varrho)\leq\sup_{v\in \Phi^{-1}([0,1])}\int_D\left|\int_{(\varrho-\varepsilon)v(\xi)}^{\varrho v(\xi)}\lambda\frac{|f(\xi,t)|}{\varepsilon}dt\right|d\xi.
 $$
 Now the growth condition \eqref{MoReGeneralg} yields
 $$
\sup_{v\in \Phi^{-1}([0,1])}\int_D\left|\int_{(\varrho-\varepsilon)v(\xi)}^{\varrho v(\xi)}\lambda\frac{|f(\xi,t)|}{\varepsilon}dt\right|d\xi \leq \sup_{v\in \Phi^{-1}([0,1])}\int_D \lambda\alpha(\xi)|v(\xi)|d\xi
$$
$$
+\sup_{v\in \Phi^{-1}([0,1])}\int_D \frac{\lambda\beta(\xi)}{p+1}\left(\frac{\varrho^{p+1}-(\varrho-\varepsilon)^{p+1}}{\varepsilon}\right)|v(\xi)|^{p+1}d\xi.
$$
 \noindent Since the Folland-Stein space $S^1_0(D)$ is compactly embedded in $L^{q}(D)$, for every $q\in [1,2^*)$,
 bearing in mind that
$$\lambda \alpha\in L^{\frac{\gamma 2^{*}}{\gamma 2^{*}-1}}(D)\qquad\mbox{and}\qquad \lambda\beta\in L^{\frac{1}{1-\gamma}}(D),$$
 the above inequality yields
 $$
 \Lambda_\lambda(\varepsilon,\varrho)\leq \kappa_{1,\gamma}\|\lambda\alpha\|_{L^{\frac{\gamma 2^{*}}{\gamma 2^{*}-1}}(D)}+\frac{\kappa_{2,\gamma}^{p+1}}{p+1}\|\lambda\beta\|_{L^{\frac{1}{1-\gamma}}(D)}\left(\frac{\varrho^{p+1}-(\varrho-\varepsilon)^{p+1}}{\varepsilon}\right).
 $$
 \indent Thus passing to the limsup, as $\varepsilon\rightarrow 0^+$, we get
 \begin{equation}\label{VGa1Glu1}
 \limsup_{\varepsilon\rightarrow 0^+}\Lambda_\lambda(\varepsilon,\varrho)<\kappa_{1,\gamma}\|\lambda\alpha\|_{L^{\frac{\gamma 2^{*}}{\gamma 2^{*}-1}}(D)}+\kappa_{2,\gamma}^{p+1}\|\lambda\beta\|_{L^{\frac{1}{1-\gamma}}(D)}\varrho^{p}.
 \end{equation}
 Now, consider the real function
 $$
 \varphi_\lambda(\varrho):=\kappa_{1,\gamma}\|\lambda\alpha\|_{L^{\frac{\gamma 2^{*}}{\gamma 2^{*}-1}}(D)}+\kappa_{2,\gamma}^{p+1}\|\lambda\beta\|_{L^{\frac{1}{1-\gamma}}(D)}\varrho^{p}-\varrho,
 $$
 for every $\varrho>0$.\par
   \noindent It is easy to see that $\inf_{\varrho>0}\varphi_\lambda(\varrho)$ is attained at
 $$
 \varrho_{0,\lambda}:=\left(\lambda p\kappa_{2,\gamma}^{p+1}\|\beta\|_{L^{\frac{1}{1-\gamma}}(D)}\right)^{\frac{1}{1-p}}.
 $$
 and, by \eqref{range}, one has
 $$
 \inf_{\varrho>0}\varphi_\lambda(\varrho)<0.
 $$
 \indent Hence inequality \eqref{VGa1Glu1} yields
 $$
 \limsup_{\varepsilon\rightarrow 0^+}\Lambda_\lambda(\varepsilon,\varrho)<\varrho_{0,\lambda}.
 $$
 \noindent Now, it follows by Lemmas \ref{lemmino1} and \ref{lemmino2} that
 \begin{equation*}\label{VGa1Glu}
\inf_{u\in\Phi^{-1}([0,\varrho_{0,\lambda}))}
\frac{\displaystyle\sup_{v\in\Phi^{-1}([0,\varrho_{0,\lambda}])}\Psi_\lambda(v)-\Psi_\lambda(u)}{\varrho_{0,\lambda}^2- \|u\|_{S^1_0(D)}^2}<\frac{1}{2}.
\end{equation*}
The above relation implies that there exists $w_{\lambda}\in S^1_0(D)$ such that
$$
\Psi_\lambda(u)\leq \displaystyle\sup_{v\in\Phi^{-1}([0,\varrho_{0,\lambda}])}\Psi_\lambda(v)<\Psi_\lambda(w_\lambda)+\frac{1}{2}(\varrho_{0,\lambda}^2-\|w_\lambda\|^2_{S^1_0(D)}),
$$
for every $u\in\Phi^{-1}([0,\varrho_{0,\lambda}])$.\par
 Thus
 \begin{equation}\label{John}
 \mathcal{J}_{\lambda}(w_{\lambda}):=\frac{1}{2}\|w_\lambda\|_{S^1_0(D)}^2-\Psi_\lambda(w_\lambda)<\frac{\varrho^2_{0,\lambda}}{2}-\Psi_\lambda(u),
 \end{equation}
 for every $u\in\Phi^{-1}([0,\varrho_{0,\lambda}])$.\par
 \indent Since the energy functional $\mathcal{J}_{\lambda}$ is sequentially weakly lower semicontinuous, its restriction on $\Phi^{-1}([0,\varrho_{0,\lambda}])$ has a global
 minimum $u_{0,\lambda}\in \Phi^{-1}([0,\varrho_{0,\lambda}])$.\par
  \indent Note that $u_{0,\lambda}$ belongs to $\Phi^{-1}([0,\varrho_{0,\lambda}))$. Indeed, if
 $\|u_{0,\lambda}\|_{S^1_0(D)}=\varrho_{0,\lambda}$, by \eqref{John}, one has
  \begin{equation*}
 \mathcal{J}_{\lambda}(u_{0,\lambda})=\frac{\varrho^2_{0,\lambda}}{2}-\Psi_\lambda(u_{0,\lambda})>\mathcal{J}_{\lambda}(w_{\lambda}),
 \end{equation*}
 \noindent which is a contradiction.\par
 \indent In conclusion, it follows that $u_{0,\lambda}\in S^1_0(D)$ is a local minimum for the energy functional $\mathcal{J}_{\lambda}$ with $$\|u_{0,\lambda}\|_{S^1_0(D)}<\varrho_{0,\lambda},$$ hence in particular, a weak solution of problem $(P_{\lambda}^{f})$.
 This completes the proof.

\begin{remark}\rm{
A crucial step in our approach is the explicit computation of the embedding
constants $\kappa_{i,\gamma}$ that naturally appear in Theorem \ref{MoReGeneral} and its
consequences. In the special case of the Heisenberg
group $\mathbb{H}^n$ an explicit expression of these quantities can be obtained by using
 the best constant in the Sobolev inequality
 \begin{equation}\label{folland2}
\int_{D}|u(\xi)|^{2^*_h}\,d\xi\leq C\int_{D}|\nabla_{\mathbb{H}^n} u(\xi)|^2\,d\xi,\,\quad\forall\, u\in C^{\infty}_0(D)
\end{equation}
 that was determined by Jerison and Lee in \cite[Corollary C]{JLe}.}
\end{remark}

\begin{remark}\rm{It is clear that Theorem \ref{FerraraMolicaBisci22Generale2} is a simple consequence of Theorem \ref{MoReGeneral}. Indeed, preserving our notations and assuming that
\begin{equation*}
0<\kappa<\frac{(p-1)^{\frac{p-1}{p}}}{pc_{1,\gamma}^{\frac{p-1}{p}}c_{2,\gamma}^{\frac{p+1}{p}}}|D|^{\frac{1-\gamma}{p}+
\frac{(1-p)(\gamma 2^{*}_h-1)}{p\gamma 2^{*}_h}
},
\end{equation*}
it is easy to note that
$$
\frac{(p-1)^{\frac{p-1}{p}}}{pc_{1,\gamma}^{\frac{p-1}{p}}c_{2,\gamma}^{\frac{p+1}{p}}\|\kappa\|^{\frac{p-1}{p}}_{L^{\frac{\gamma 2^{*}}{\gamma 2^{*}-1}}(D)}\|\kappa\|_{L^{\frac{1}{1-\gamma}}(D)}^{\frac{1}{p}}}>1.
$$
Since all the assumptions of Theorem \ref{MoReGeneral} have been verified (with $\lambda=1$) the conclusion of Theorem \ref{FerraraMolicaBisci22Generale2} immediately follows.
}
\end{remark}

A special case of Theorem \ref{MoReGeneral} reads as follows.

\begin{corollary}\label{MoReGeneralCor}
Let $D$ be a smooth and bounded domain of the {Carnot group}
$\mathbb{G}$ of homogeneous dimension ${\rm dim}_h{\mathbb{G}}\geq 3$ and let
$f:D\times\erre\rightarrow\erre$ be a Carath\'{e}odory function such that condition \eqref{MoReGeneralg} holds.
Assume that
\begin{equation}\label{MoReGeneral3D}
\|\alpha\|^{p-1}_{L^{\frac{\gamma 2^{*}}{\gamma 2^{*}-1}}(D)}\|\beta\|_{L^{\frac{1}{1-\gamma}}(D)}<\frac{(p-1)^{p-1}}{p\kappa_{1,\gamma}^{{p-1}}\kappa_{2,\gamma}^{{p+1}}}.
\end{equation}
 Then
the following subelliptic problem
$$
(P_{f})\,\,\,\,\,\,\,\,\,\,\left\{
\begin{array}{ll}
-\Delta_{\mathbb{G}} u=\displaystyle f(\xi,u) &  \mbox{\rm in } D\\
u|_{\partial D}=0, &
\end{array}\right.
$$
has a weak solution $u_{0}\in S^1_0(D)$ and $$\|u_{0}\|_{S^1_0(D)}<\left( p\kappa_{2,\gamma}^{p+1}\|\beta\|_{L^{\frac{1}{1-\gamma}}(D)}\right)^{\frac{1}{1-p}}.$$
\end{corollary}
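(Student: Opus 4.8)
The plan is to obtain Corollary \ref{MoReGeneralCor} as the special case $\lambda=1$ of Theorem \ref{MoReGeneral}. All the structural hypotheses required by Theorem \ref{MoReGeneral} --- that $f\colon D\times\erre\to\erre$ is a Carath\'eodory function obeying the subcritical growth bound \eqref{MoReGeneralg}, that $\alpha\in L^{\frac{\gamma 2^{*}}{\gamma 2^{*}-1}}(D)$ and $\beta\in L^{\frac{1}{1-\gamma}}(D)$, and that $\gamma\in(2/2^{*},1)$, $p\in(1,\gamma 2^{*}-1)$ with $2^{*}=\frac{2{\rm dim}_h{\mathbb{G}}}{{\rm dim}_h{\mathbb{G}}-3}$ --- are assumed verbatim in the corollary, so there is nothing to re-verify concerning $f$ or the exponents. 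The only point needing attention is that the value $\lambda=1$ lies in the admissible parameter interval \eqref{MoReGeneral3}.

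To check this, I would introduce
$$
\Lambda^{\ast}:=\frac{(p-1)^{\frac{p-1}{p}}}{p\kappa_{1,\gamma}^{\frac{p-1}{p}}\kappa_{2,\gamma}^{\frac{p+1}{p}}\|\alpha\|^{\frac{p-1}{p}}_{L^{\frac{\gamma 2^{*}}{\gamma 2^{*}-1}}(D)}\|\beta\|_{L^{\frac{1}{1-\gamma}}(D)}^{\frac{1}{p}}},
$$
so that \eqref{MoReGeneral3} reads $\lambda\in(0,\Lambda^{\ast})$, and observe that the requirement $\Lambda^{\ast}>1$ --- equivalently $p\kappa_{1,\gamma}^{\frac{p-1}{p}}\kappa_{2,\gamma}^{\frac{p+1}{p}}\|\alpha\|^{\frac{p-1}{p}}_{L^{\frac{\gamma 2^{*}}{\gamma 2^{*}-1}}(D)}\|\beta\|_{L^{\frac{1}{1-\gamma}}(D)}^{\frac{1}{p}}<(p-1)^{\frac{p-1}{p}}$ --- is, since $t\mapsto t^{p}$ is increasing on $(0,\infty)$, just a reformulation of the hypothesis \eqref{MoReGeneral3D} after raising both sides to the $p$-th power and isolating $\|\alpha\|^{p-1}_{L^{\frac{\gamma 2^{*}}{\gamma 2^{*}-1}}(D)}\|\beta\|_{L^{\frac{1}{1-\gamma}}(D)}$. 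Hence \eqref{MoReGeneral3D} is precisely the assertion that $1\in(0,\Lambda^{\ast})$, i.e. $\lambda=1$ is admissible.

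It then suffices to apply Theorem \ref{MoReGeneral} with $\lambda=1$. Since $(P_f)$ coincides with $(P^{f}_{1})$, this immediately yields a weak solution $u_0:=u_{0,1}\in S^1_0(D)$ with
$$
\|u_0\|_{S^1_0(D)}<\left(p\kappa_{2,\gamma}^{p+1}\|\beta\|_{L^{\frac{1}{1-\gamma}}(D)}\right)^{\frac{1}{1-p}},
$$
which is exactly the claimed bound (this is the quantity denoted $\varrho_{0,1}$ in the proof of Theorem \ref{MoReGeneral}). This completes the proof.

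I do not expect any genuine obstacle here, since the whole argument is a matter of unwinding notation. The one spot that calls for a little care is the elementary algebra identifying $\Lambda^{\ast}>1$ with \eqref{MoReGeneral3D} --- in particular, correctly tracking the power of $p$ that appears in the denominator of \eqref{MoReGeneral3D} --- together with the remark that, because the interval in \eqref{MoReGeneral3} is open, it is exactly the strict inequality in \eqref{MoReGeneral3D} that guarantees $\lambda=1$ is permissible.
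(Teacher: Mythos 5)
Your overall strategy --- treating $(P_f)$ as $(P^f_\lambda)$ with $\lambda=1$ and invoking Theorem \ref{MoReGeneral} --- is exactly the route the paper intends, and the concluding norm bound $\left(p\kappa_{2,\gamma}^{p+1}\|\beta\|_{L^{\frac{1}{1-\gamma}}(D)}\right)^{\frac{1}{1-p}}=\varrho_{0,1}$ is the right specialization. The problem is the algebra at the one step you yourself flagged as delicate. Write $K:=\kappa_{1,\gamma}^{p-1}\kappa_{2,\gamma}^{p+1}\|\alpha\|^{p-1}_{L^{\frac{\gamma 2^{*}}{\gamma 2^{*}-1}}(D)}\|\beta\|_{L^{\frac{1}{1-\gamma}}(D)}$. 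The admissibility of $\lambda=1$ in \eqref{MoReGeneral3}, i.e. $\Lambda^{\ast}>1$, reads $p\,\kappa_{1,\gamma}^{\frac{p-1}{p}}\kappa_{2,\gamma}^{\frac{p+1}{p}}\|\alpha\|^{\frac{p-1}{p}}\|\beta\|^{\frac{1}{p}}<(p-1)^{\frac{p-1}{p}}$, and raising this to the $p$-th power turns the factor $p$ into $p^{p}$: it is equivalent to $p^{p}K<(p-1)^{p-1}$. Hypothesis \eqref{MoReGeneral3D}, however, only asserts $pK<(p-1)^{p-1}$. Since $p>1$ gives $p^{p}>p$, condition \eqref{MoReGeneral3D} is strictly weaker than $\Lambda^{\ast}>1$; from it one only obtains $\Lambda^{\ast}>p^{-\frac{p-1}{p}}$, which is smaller than $1$. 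So your claim that \eqref{MoReGeneral3D} ``is precisely the assertion that $1\in(0,\Lambda^{\ast})$'' is false, and with the hypothesis in the form stated, Theorem \ref{MoReGeneral} cannot be applied at $\lambda=1$ without further argument.

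The gap is localized and easy to repair in one of two ways. If the right-hand side of \eqref{MoReGeneral3D} is read with $p^{p}$ in place of $p$ (which is presumably what the statement should say, and is the condition your own computation actually produces), then your verification is correct and the rest of the proof goes through verbatim. If instead the hypothesis is kept exactly as printed, then you must either strengthen it or supply an independent argument, because under $pK<(p-1)^{p-1}$ alone the value $\lambda=1$ need not lie in the open interval \eqref{MoReGeneral3}, and the direct specialization of Theorem \ref{MoReGeneral} --- which is the entire content of your proof --- does not yield the conclusion. In short: same approach as the paper, but the key arithmetical equivalence you assert fails exactly at the power of $p$ you warned about, so as written the proof has a genuine gap.
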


\begin{remark}\rm{A special case of Corollary \ref{MoReGeneralCor} in the Euclidean setting has been proved in \cite{AC} by exploiting the variational principle obtained by Ricceri in \cite{R2}.
}
\end{remark}

\indent In conclusion, we present a direct application of our main result.

\begin{example}\label{esempiuccio}\rm{
Let $D$ be a smooth and bounded domain of a Carnot group $\mathbb{G}$ with ${\rm dim}_h{\mathbb{G}}\geq 3$ and let $$\alpha\in L^{\frac{\gamma 2^{*}}{\gamma 2^{*}-1}}(D)\setminus\{0\},$$
with $\gamma\in (2/2^*,1)$.\par
By virtue of Theorem \ref{MoReGeneral}, there exists an open interval $\Lambda\subset (0,+\infty)$ such that for every $\lambda\in \Lambda$,
the following problem
$$
\left\{
\begin{array}{ll}
-\Delta_{\mathbb{G}} u=\lambda\displaystyle (\alpha(\xi)+|u|^p) &  \mbox{\rm in } D\\
u|_{\partial D}=0, &
\end{array}\right.
$$
where $p\in (1,\gamma 2^*-1)$,
admits at least one non-trivial weak solution $u_{0,\lambda}\in S^1_0(D)$ such that
$$\|u_{0,\lambda}\|_{S^1_0(D)}<\left(\lambda p\kappa_{2,\gamma}^{p+1}|D|^{1-\gamma}\right)^{\frac{1}{1-p}}.$$
More precisely, a concrete expression of the interval $\Lambda$ is given by
$$
\Lambda:=\left(0,\frac{(p-1)^{\frac{p-1}{p}}|D|^{\frac{\gamma-1}{p}}}{p\kappa_{1,\gamma}^{\frac{p-1}{p}}\kappa_{2,\gamma}^{\frac{p+1}{p}}\|\alpha\|^{\frac{p-1}{p}}_{L^{\frac{\gamma 2^{*}}{\gamma 2^{*}-1}}(D)}}\right).
$$
}
\end{example}

\indent {\bf Acknowledgements.}  The manuscript was realized within the auspices of the INdAM - GNAMPA Project 2015 {\it Modelli ed equazioni non-locali di tipo frazionario} and the SRA grants P1-0292, J1-7025, J1-6721, and J1-5435.

\end{document}